\titleformat*{\section}{\normalsize\bfseries}
\def\R{\mathbb{R}}
\def\Z{\mathbb{Z}}
\def\C{\mathbb{C}}
\def\Re{{\mathrm{Re}}}
\def\a{\alpha}
\def\b{\beta}
\def\d{\displaystyle}
\def\coss{{\mathrm{Cos^{-1}}}}
\newtheorem{thm}{Theorem}[section]
\newtheorem{lem}[thm]{Lemma}
\newtheorem{cor}[thm]{Corollary}
\newtheorem{prop}[thm]{Proposition}
\newtheorem{rem}[thm]{Remark}
\begin{document}

\title{\large{\bf{ASYMPTOTIC STABILITY AND STABILITY SWITCHING \\FOR A SYSTEM OF DELAY DIFFERENTIAL EQUATIONS}}}

\author{Wataru Saito and Ikki Fukuda\\ [.7em]
Department of Mathematics, Hokkaido University}

\date{}

\maketitle

\footnote[0]{2010 Mathematics Subject Classification: 34K20, 34K25, 93C23.}

\begin{abstract}
In this paper, we consider the asymptotic stability for a system of linear delay differential equations. By analysing of the characteristic equation in detail, we have established the necessary and sufficient condition for the asymptotic stability for the zero solution of the system including the ``stability switching'' which describe the transition between stability and instability.
\end{abstract}

\smallskip
{\bf Keywords:} asymptotic stability, characteristic equation, delay, stability switching. 

\section{Introduction}
In this paper, we consider the following linear system of delay differential equations: 
\begin{align}
\begin{split}
x'(t)+ax(t)+\a x(t-\tau)+by(t)&=0, \\
y'(t)+ay(t)+cx(t)+\a y(t-\tau)&=0, \\
\end{split}
\end{align}
where $x(t)$ and $y(t)$ denote real-valued unknown functions for $t\ge 0$. The coefficients $a$, $\a$, $b$ and $c$ are real numbers, while $\tau$ is a positive number, called the time delay parameter. To study delay differential equations is very important because it appears in several physical fields and engineering as mathematical models. For example, population models of Lotka-Volterra type and prey-predator models, neural network models, and also traffic flow, see e.g.  \cite{1, 5, 17, 20}. In the present paper, we consider the asymptotic stability for the zero solution of (1.1). 

Let us introduce the known results related to this problem briefly. For the scalar equation, the asymptotic stability is well studied (e.g. \cite{22, 4, 21}). On the other hand, Suzuki-Matsunaga \cite{19} studied some system of delay differential equations with off-diagonal delay. Also, Matsunaga \cite{13} studied the system (1.1) with $a=0$, and obtained the necessary and sufficient condition for the asymptotic stability. Moreover, Nishiguchi \cite{24} generalized the result obtained in \cite{13}. In this paper, we derive the necessary and sufficient condition for the asymptotic stability for the zero solution of (1.1). Especially, the main purpose of this paper is to derive the condition associated with stability switching. 
\section{Main Result}
Let us state our main result concerning the necessary and sufficient condition for the asymptotic stability of the system (1.1): 
\begin{thm}
Suppose that $\a^{2}\neq a^{2}$ when $bc<0$. The zero solution of the system $(1.1)$ is asymptotically stable if and only if any one of the following conditions holds: 
\\[.3em]
\noindent
$\rm{(i)}$ $bc\ge 0$ and
\begin{equation*}
-(a+\b)<\a \le a+\b \ \ or \ \ 
\begin{cases}
|a+\b|<\a, \\
\d 0<\tau <\frac{1}{\sqrt{\a^{2}-(a+\b)^{2}}} \coss\left(-{\frac{a+\b}{\a}}\right)
\end{cases}  
\end{equation*}
and 
\begin{equation*}
-(a-\b)<\a \le a-\b \ \ or \ \   
\begin{cases}
|a-\b|<\a, \\
\d 0<\tau <\frac{1}{\sqrt{\a^{2}-(a-\b)^2}} \coss\left(-\frac{a-\b}{\a}\right), 
\end{cases}  
\end{equation*}
where $\beta:=\sqrt{bc}$.
\newpage

\noindent
$\rm{(ii)}$ $bc<0$ and \\[.3em]
$\rm{(ii-1)}$ $\a^{2}>a^{2}$ and 
\begin{equation*}
\begin{cases}
\sqrt{D}\ge d \ ,\ a+\a>0\ \ and \ \ \tau \in (0, r_{1.0}),\\
\sqrt{D}< d 
\begin{cases}
a+\a>0
\begin{cases}
k=0\ \ and \ \ \tau \in (0, r_{1.0}),\\
k\ge1\ \ and \ \ \tau \in (0, r_{1.0}) \cup (r_{2.0}, r_{1.1}) \cup \cdots \cup (r_{2.k-1}, r_{1.k}),\\
\end{cases}\\
a+\a<0\ ,\ l\ge0\ \ and \ \ \tau \in (r_{2.0}, r_{1.0}) \cup \cdots \cup (r_{2.l}, r_{1.l}),\\
\end{cases}\\
\end{cases}
\end{equation*}
$\rm{(ii-2)}$ $\a^{2}<a^{2}$ and $a+\a>0$,\\
where $d:=\sqrt{-bc}$, $D:=\a^{2}-a^{2}$, $\d E:=-\frac{a}{\a}$, $\d k:=\left\lfloor\frac{d\ \coss E}{2\sqrt{D}\pi}\right\rfloor$ and $\d l:=\left\lfloor\frac{\left(d-\sqrt{D}\right)\pi-d\ \coss E}{2\sqrt{D}\pi}\right\rfloor$, 
while 
\begin{equation}
\omega_{1}:=-d-\sqrt{D}\ \ and \ \ r_{1.n}:=
\begin{cases}
\d\frac{1}{\omega_1}\left(-2n\pi-\coss E\right),\ \a>0,\\
\d\frac{1}{\omega_1}\left(-2(n+1)\pi+\coss E\right),\ \a<0
\end{cases}
\end{equation}
for $n=0, 1, 2, \cdots$, and 
\begin{equation}
\omega_{2}:=-d+\sqrt{D}\ \ and \ \ r_{2.n}:=
\begin{cases}
\d\frac{1}{\omega_2}\left(-2(n+1)\pi+\coss E\right),\ \a>0,\\
\d\frac{1}{\omega_2}\left(-2n\pi-\coss E\right),\ \a<0
\end{cases}
\end{equation}
for $d\neq \sqrt{D}$ and $n=0, 1, 2, \cdots$. 
Here, we have used the floor function $\left\lfloor x\right\rfloor:=\max \{n\in \Z \ |\ n\le x\}$.
\end{thm}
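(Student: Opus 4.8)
The plan is to reduce the stability problem for the $2\times2$ system (1.1) to the location of the roots of its characteristic equation and then to analyse that equation by tracking roots as they cross the imaginary axis. First I would substitute $(x,y)=(u,v)e^{\lambda t}$ into (1.1); because the delay and diagonal coefficients are identical in both equations, the characteristic matrix is $(\lambda+a+\alpha e^{-\lambda\tau})I + \begin{pmatrix}0&b\\ c&0\end{pmatrix}$, whose determinant factors as
\begin{equation*}
\bigl(\lambda+a+\alpha e^{-\lambda\tau}\bigr)^2-bc
=\bigl(\lambda+a+\alpha e^{-\lambda\tau}-\beta\bigr)\bigl(\lambda+a+\alpha e^{-\lambda\tau}+\beta\bigr)
\end{equation*}
when $bc\ge0$ (with $\beta=\sqrt{bc}$), and as $\lambda+a+\alpha e^{-\lambda\tau}=\pm i\,d$ when $bc<0$ (with $d=\sqrt{-bc}$). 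Thus the zero solution of (1.1) is asymptotically stable iff every root of each scalar transcendental equation $\lambda+(a\mp\beta)+\alpha e^{-\lambda\tau}=0$ (case $bc\ge0$) respectively $\lambda + a + \alpha e^{-\lambda\tau}=\pm id$ (case $bc<0$) has negative real part.

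For part (i) the two scalar equations are of the classical form $\lambda + A + \alpha e^{-\lambda\tau}=0$ with $A=a\mp\beta$ real, so I would simply invoke (or reprove in a line) the well-known Hayes-type criterion: all roots have negative real part iff $-A<\alpha\le A$, or $|A|<\alpha$ and $0<\tau<\tfrac{1}{\sqrt{\alpha^2-A^2}}\coss(-A/\alpha)$. Intersecting the two resulting conditions (for $A=a+\beta$ and $A=a-\beta$) yields exactly the stated condition (i). The substance of the theorem is part (ii), $bc<0$, where the relevant equation $\lambda + a + \alpha e^{-\lambda\tau}= id$ has genuinely complex constant term, so the classical scalar results do not apply directly and one must redo the root-crossing analysis.

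For part (ii) the approach is the standard delay-induced stability-switch argument. At $\tau=0$ the equation is $\lambda = -a-\alpha \pm id$, stable iff $a+\alpha>0$; this gives the necessity of $a+\alpha>0$ and immediately settles (ii-2) once one checks no imaginary root can appear when $\alpha^2<a^2$ (the modulus identity $|i\omega+a|=|\alpha|$ forces $\omega^2 = \alpha^2-a^2<0$, impossible). For (ii-1), $\alpha^2>a^2$, I would look for purely imaginary roots $\lambda=i\omega$: separating real and imaginary parts of $i\omega+a+id=-\alpha e^{-i\omega\tau}$ gives $a=-\alpha\cos\omega\tau$ and $\omega+d=\alpha\sin\omega\tau$, whence $\cos\omega\tau=-a/\alpha=E$ and $(\omega+d)^2=\alpha^2-a^2=D$, i.e. $\omega=\omega_{1}=-d-\sqrt D$ or $\omega=\omega_2=-d+\sqrt D$. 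Solving $\cos\omega_j\tau=E$ for $\tau>0$ produces exactly the two families $\{r_{1.n}\}$, $\{r_{2.n}\}$ in (2.2)–(2.3) (the sign of $\alpha$ dictating which branch of $\coss$ and the parity of the $2n\pi$ shift). A transversality computation — differentiating the equation implicitly in $\tau$ and evaluating $\operatorname{sign}\operatorname{Re}(d\lambda/d\tau)$ at each crossing — shows crossings along the $\omega_1$-family always move a root into the right half-plane while crossings along the $\omega_2$-family move one back to the left (or vice versa depending on signs), so the stable $\tau$-set is the union of intervals delimited by alternating $r_{2.\bullet}$ and $r_{1.\bullet}$ values. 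Finally I would order the two families on the positive real axis and count how many $r_{2.n}$ fall before the first $r_{1.0}$ (giving $l$) and how many full periods fit into the initial stable stretch (giving $k$), together with the borderline comparison $\sqrt D \gtrless d$ which decides whether $\omega_2$ has the same sign as $\omega_1$ or the families are absent/reordered; matching these cases to the bookkeeping in the statement completes the proof.

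The main obstacle I expect is the last step: the careful ordering and interleaving of the two sequences $\{r_{1.n}\}$ and $\{r_{2.n}\}$ on $(0,\infty)$, and verifying that the alternation of gain/loss of stability at successive crossings produces precisely the listed unions of intervals — including getting the floor-function indices $k$ and $l$ exactly right and correctly handling the subcase split $a+\alpha>0$ versus $a+\alpha<0$ and $\sqrt D\ge d$ versus $\sqrt D<d$. The transversality sign computation and the $\tau=0$ analysis are routine; the combinatorial bookkeeping of the switching intervals is where the real work lies.
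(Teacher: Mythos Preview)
Your plan is correct and follows essentially the same route as the paper: factor the characteristic determinant, apply the Hayes criterion for $bc\ge0$, and for $bc<0$ do the root-crossing analysis on $F(\lambda)=\lambda+a+\alpha e^{-\lambda\tau}+id$ via the imaginary-root locations $\omega_{1,2}=-d\mp\sqrt D$, a transversality sign computation, and then the interleaving bookkeeping of the $r_{1.n}$ and $r_{2.n}$ sequences. One small slip to fix: in your (ii-2) argument the modulus identity is $a^2+(\omega+d)^2=\alpha^2$, not $a^2+\omega^2=\alpha^2$, so it is $(\omega+d)^2<0$ that is impossible; also, the precise transversality result you will need is that $i\omega_1$ always crosses left-to-right while $i\omega_2$ crosses left-to-right when $\sqrt D>d$ and right-to-left when $\sqrt D<d$, which is what drives the $\sqrt D\gtrless d$ case split.
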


\begin{rem}
\rm{
If we take $a=0$ in Theorem 2.1, then we can easily get the same conditions for Theorem 1.1 obtained in \cite{13}.
}
\end{rem}

\begin{rem}
\rm{
The conditions for $\tau$ in $\rm{(ii-1)}$ tells us that as $\tau$ increases monotonously from $0$, the stability of the zero solutions of system (1.1) switches in finite time. This kind of phenomena is called {\it ``stability switching''} (see, e.g. \cite{4, 13}). 
}
\end{rem}

\section{Proof of the Main Theorem}
In this section, we prove Theorem 2.1. The stability for the zero solution of system (1.1) is completely determined by the roots of the associated characteristic equation. Now, let $(\lambda, \phi)\in \C\times \R^{2}$. Substituting $(x(t), y(t))^{T}=e^{\lambda t}\phi$ into (1.1), we have the following eigenvalue problem corresponding to (1.1): 
 \begin{equation*}
 \left((\lambda +\a e^{-\lambda \tau})I+A\right)\phi=0 \ \ \text{with} \ \ A=\left(\begin{array}{ccccc}
      a& b \\
      c &a 
    \end{array}\right).
 \end{equation*}
    Thus, our characteristic equation is given by
\begin{align}
\begin{split}
G(\lambda)
:=& \det \left((\lambda +\a e^{-\lambda \tau})I+A\right)=\left(\lambda+\a e^{-\lambda \tau}+a\right)^2-bc \\
=&\begin{cases}
\left(\lambda +\a e^{-\lambda \tau}+a+\sqrt{bc}\right)\left(\lambda +\a e^{-\lambda \tau}+a-\sqrt{bc}\right),\ \ bc \ge 0,\\
\left(\lambda +\a e^{-\lambda \tau}+a+i\sqrt{-bc}\right)\left(\lambda +\a e^{-\lambda \tau}+a-i\sqrt{-bc}\right),\ \ bc<0
\end{cases}
=0.
\end{split}
\end{align}
The above equation $G(\lambda)=0$ is called a characteristic equation of (1.1), and the solution of the equation is called a characteristic root. It is called the zero solution of system (1.1) is asymptotically stable if and only if all the roots of the characteristic equation lie in the open left half of the complex plane (see, e.g. \cite{7}). Therefore, the goal of our study is to derive the conditions that the real parts of all the characteristic roots are negative. 

First, let $\a=0$. In this case, the characteristic roots are either $\lambda=-a\pm \sqrt{bc}$ if $bc\ge0$ or $\lambda=-a\pm i\sqrt{-bc}$ if $bc<0$. Then, all the roots of (3.1) have negative real parts if and only if one of the following conditions holds:
\begin{equation}
bc\ge0, \ -a+\sqrt{bc}<0\ \ \text{or} \ \ bc<0, \ a>0.
\end{equation}

In the following, we consider (3.1) with $\a \neq0$. First we treat the case of $bc\ge0$. Recalling $\beta=\sqrt{bc}$, (3.1) is reduced to 
\begin{equation*}
\lambda +\a e^{-\lambda \tau}+a+\beta=0\ \ \text{or} \ \ \lambda +\a e^{-\lambda \tau}+a-\beta=0. 
\end{equation*}
For these equations, we have the following lemma (for the proof, see e.g. \cite{4, 21}):
\begin{lem}
Let $p$ and $q$ be real numbers. Then all the roots of the equation $\lambda+p+qe^{-\lambda \tau}=0$ have negative real parts if and only if the following conditions holds:
\begin{equation*}
-p<q\le p
\end{equation*}
or 
\begin{equation*}
|p|<q \ \ and \ \ 0<\tau<\frac{1}{\sqrt{q^{2}-p^{2}}}\coss\left(-\frac{p}{q}\right).
\end{equation*}
\end{lem}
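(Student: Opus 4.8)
The plan is to pin down the location of the roots of $\lambda+p+qe^{-\lambda\tau}=0$ by the classical boundary‑crossing (``$D$‑subdivision'') method. Since $p,q$ are real the roots come in conjugate pairs, and asymptotic stability of the zero solution is equivalent to every root lying in $\{\Re\lambda<0\}$. First I would record the a priori bound: if $\Re\lambda\ge 0$ then $|qe^{-\lambda\tau}|=|q|e^{-\tau\Re\lambda}\le|q|$, hence $|\lambda+p|\le|q|$, so all roots with $\Re\lambda\ge0$ lie in a fixed bounded disk. Consequently, for each $\tau>0$ there are only finitely many such roots, and — the roots depending continuously on $\tau$ and none being able to escape to infinity through the closed right half‑plane — the number $N(\tau)$ of roots (with multiplicity) in $\{\Re\lambda\ge0\}$ is locally constant in $\tau$ and can change only at those $\tau$ for which there is a root on the imaginary axis. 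This reduces the lemma to locating the imaginary crossings, determining the direction of crossing, and fixing the count at one reference value.

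Second I would compute the crossings. Setting $\lambda=i\omega$ with $\omega>0$ (the value $\omega<0$ is conjugate, and $\omega=0$ is a root precisely when $p+q=0$) and separating real and imaginary parts gives $p+q\cos\omega\tau=0$ and $\omega=q\sin\omega\tau$; squaring and adding forces $\omega=\omega_0:=\sqrt{q^{2}-p^{2}}$, so a crossing with $\omega\neq0$ exists if and only if $q^{2}>p^{2}$, and it then occurs exactly at a sequence $\tau_k$ with common difference $2\pi/\omega_0$. When $q>0$ one has $\cos\omega_0\tau_0=-p/q$ together with $\sin\omega_0\tau_0>0$, so $\omega_0\tau_0=\coss(-p/q)\in(0,\pi)$, i.e. the smallest positive crossing is $\tau_0=\frac{1}{\sqrt{q^{2}-p^{2}}}\coss(-p/q)$. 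For transversality I would differentiate the equation in $\tau$ and use $qe^{-\lambda\tau}=-(\lambda+p)$ to get $\frac{d\lambda}{d\tau}=\frac{-\lambda(\lambda+p)}{1+\tau(\lambda+p)}$; evaluating at $\lambda=i\omega_0$ a short computation yields $\Re\frac{d\lambda}{d\tau}=\frac{\omega_0^{2}}{(1+p\tau)^{2}+\tau^{2}\omega_0^{2}}>0$, so at every crossing a conjugate pair of roots moves strictly from left to right: $N(\tau)$ is nondecreasing and, once positive, stays positive.

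Third I would fix the count via a reference value: as $\tau\to0^{+}$ exactly one root stays bounded and converges to $-(p+q)$ while all remaining roots have $\Re\lambda\to-\infty$, so for all small $\tau>0$ the equation is asymptotically stable iff $p+q>0$. Combining this with the crossing analysis case by case then reproduces the statement: if $q^{2}<p^{2}$ there are no crossings, so asymptotic stability holds for all $\tau>0$ iff $p+q>0$, which under $q^{2}<p^{2}$ is exactly $-p<q<p$; on the boundary $q^{2}=p^{2}$ with $p+q\neq0$ there is no axis root and stability for all $\tau$ holds iff $2p>0$, giving the endpoint $q=p>0$, while $p+q=0$ is never stable; and if $q^{2}>p^{2}$ then $q<0$ forces $p+q<0$ (never stable) whereas $q>0$ gives $p+q>0$, so $N\equiv0$ on $(0,\tau_0)$ and $N\ge2$ for $\tau\ge\tau_0$, i.e. asymptotic stability holds iff $0<\tau<\tau_0=\frac{1}{\sqrt{q^{2}-p^{2}}}\coss(-p/q)$. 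Merging the two regimes yields precisely the two alternatives in the lemma.

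The main obstacle is the analytic bookkeeping around the imaginary axis: making rigorous that $N(\tau)$ is finite and locally constant — the a priori bound controls the right half‑plane, but one must still rule out accumulation of roots on $i\R$ and justify the count, most cleanly by the argument principle on a large semicircular contour (of radius exceeding $|p|+|q|$) indented around any axis crossing — and establishing the $\tau\to0^{+}$ asymptotics that fixes the reference count (a Rouché argument on a fixed disk together with the estimate above). The remaining pieces, namely the crossing values $\tau_k$ and the sign of $\Re(d\lambda/d\tau)$ at a crossing, are explicit computations.
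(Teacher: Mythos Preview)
Your proposal is correct. Note, however, that the paper does not actually prove this lemma: it is quoted as a classical result with a reference to Hayes and to Freedman--Kuang, so there is no ``paper's own proof'' to compare against. That said, your boundary-crossing/$D$-subdivision argument is exactly the standard one behind those references, and it is in fact the same machinery (continuous dependence of roots on $\tau$, location of purely imaginary roots, and the sign of $\Re\,d\lambda/d\tau$ at a crossing) that the paper itself deploys later, in Lemmas~3.6--3.7 and Proposition~3.9, for the more involved factor $F(\lambda)=\lambda+\a e^{-\lambda\tau}+a+id$. Your computation $\Re(d\lambda/d\tau)|_{\lambda=i\omega_0}=\omega_0^{2}/\bigl((1+p\tau)^{2}+\tau^{2}\omega_0^{2}\bigr)>0$ is precisely the scalar analogue of the paper's formula~(3.5), and your $\tau\to0^{+}$ reference count plays the role of the paper's appeal to the single root $\lambda=-\a-a-id$ at $\tau=0$ together with Corollary~3.4. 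The only places that genuinely require care, as you rightly flag, are the finiteness/local-constancy of $N(\tau)$ (handled by the a~priori bound $|\lambda+p|\le|q|$ on $\{\Re\lambda\ge0\}$ plus the argument principle, or equivalently by the paper's Lemma~3.3) and the $\tau\to0^{+}$ asymptotics fixing the initial count.
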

\noindent Therefore, by virtue of this Lemma 3.1, one can immediately obtain the following result: 
\begin{prop}
Let $bc\ge0$. Then all the roots of $(3.1)$ have negative real parts if and only if the condition ${\rm(i)}$ holds. 
\end{prop}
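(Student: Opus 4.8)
The plan is to obtain Proposition 3.2 as an immediate consequence of the factorization in (3.1) together with Lemma 3.1. Since $bc \ge 0$, the characteristic equation (3.1) reads
\begin{equation*}
\bigl(\lambda + \a e^{-\lambda \t} + (a+\b)\bigr)\bigl(\lambda + \a e^{-\lambda \t} + (a-\b)\bigr) = 0 ,
\end{equation*}
so the zero set of the entire function $G$ is precisely the union of the zero sets of $F_{+}(\lambda) := \lambda + \a e^{-\lambda \t} + (a+\b)$ and $F_{-}(\lambda) := \lambda + \a e^{-\lambda \t} + (a-\b)$. Hence all roots of (3.1) lie in the open left half-plane if and only if all roots of $F_{+} = 0$ do and all roots of $F_{-} = 0$ do.

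I would then apply Lemma 3.1 to each factor in turn. For $F_{+}$, take $p = a+\b$ and $q = \a$; Lemma 3.1 says all its roots have negative real part iff $-(a+\b) < \a \le a+\b$, or $|a+\b| < \a$ and $0 < \t < \frac{1}{\sqrt{\a^{2}-(a+\b)^{2}}}\coss\!\left(-\frac{a+\b}{\a}\right)$. For $F_{-}$, take $p = a-\b$ and $q = \a$ and argue identically. Taking the conjunction of the two resulting criteria, and recalling the standing assumption $bc \ge 0$ (under which $\b = \sqrt{bc}$ is real), reproduces verbatim the condition ${\rm (i)}$ stated in Theorem 2.1, which proves the proposition. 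When $\a = 0$ this is just the degenerate case $q = 0$ of Lemma 3.1; for $\a \ne 0$ note that the second alternative $|p| < q$ forces $\a = q > 0$, so $-p/q \in (-1,1)$ and $\coss(-p/q)$ is well defined.

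I do not anticipate a genuine obstacle here: the content is entirely the factorization plus Lemma 3.1, and the only point requiring care is the elementary logical step that ``every root of a product of entire functions lies in a given region'' is equivalent to ``every root of each factor does'', which holds because the root set of the product is the union of the root sets. The remaining work is purely the bookkeeping of matching the two instances of Lemma 3.1 to the two halves of condition ${\rm (i)}$.
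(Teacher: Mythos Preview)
Your proposal is correct and is essentially identical to the paper's own argument: the paper simply says that by virtue of Lemma 3.1 applied to each factor $\lambda+\a e^{-\lambda\tau}+(a\pm\beta)=0$, one immediately obtains the proposition. Your write-up makes explicit the union-of-roots step and the $\a=0$ degenerate case, but the approach is the same.
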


Next, we consider the case of $bc<0$. Recalling $d=\sqrt{-bc}$, we see that all the roots of (3.1) have negative real parts if and only if all the roots of  
\begin{equation}
F(\lambda):=\lambda+\a e^{-\lambda \tau}+a+id=0
\end{equation}
have negative real parts since the function $G(\lambda)$ is written as $G(\lambda)=F(\lambda)\overline{F(\overline{\lambda})}$, where $\overline{\lambda}$ denotes the complex conjugate of any complex number $\lambda$. The next lemma plays an essential role in determining the location of roots of (3.3) for a given $\tau$ (for the proof, see \cite{23}).
\begin{lem}
Let $f(\lambda, \tau)=\lambda^{n}+g(\lambda, \tau)$, where $g(\lambda, \tau)$ is an analytic function and $n$ is a positive integer. Assume that $\limsup_{\Re \lambda>0, \ |\lambda|\to \infty}|\lambda^{-n}g(\lambda, \tau)|<1$. As $\tau$ varies, the number of roots of $f(\lambda, \tau)=0$ including multiplicity in the open right half-plane can change only if a root appears on or crosses the imaginary axis. 
\end{lem}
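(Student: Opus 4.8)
This is a standard argument-principle statement about the continuity of the zero set of an analytic family under perturbation of a parameter, so the plan is to make that reasoning precise. First I would fix a compact interval $J$ of admissible delays $\tau$ and read the hypothesis $\limsup_{\Re\lambda>0,\,|\lambda|\to\infty}|\lambda^{-n}g(\lambda,\tau)|<1$ as holding locally uniformly in $\tau$ (which is automatic in our application, where $n=1$ and $g(\lambda,\tau)=\a e^{-\lambda\tau}+a+id$). From this one extracts $R_{0}>0$ and $\delta\in(0,1)$ with $|g(\lambda,\tau)|\le\delta|\lambda|^{n}$ whenever $\Re\lambda\ge0$, $|\lambda|\ge R_{0}$ and $\tau\in J$; hence $|f(\lambda,\tau)|\ge|\lambda|^{n}-|g(\lambda,\tau)|\ge(1-\delta)|\lambda|^{n}>0$ on that region, so for every such $\tau$ every root of $f(\cdot,\tau)$ in the closed right half-plane lies in $\{|\lambda|<R_{0}\}$.

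Next I would fix $R>R_{0}$ and take $\Gamma_{R}$ to be the positively oriented boundary of the half-disk $\{\Re\lambda>0,\ |\lambda|<R\}$, consisting of the imaginary-axis segment from $-iR$ to $iR$ together with the right semicircle of radius $R$. By the previous step $f(\cdot,\tau)$ never vanishes on the semicircular part of $\Gamma_{R}$, and it vanishes on the axis segment precisely when a root of $f(\cdot,\tau)$ lies on the imaginary axis. Whenever $f(\cdot,\tau)$ has no root on $\Gamma_{R}$, the argument principle yields
\begin{equation*}
N(\tau):=\#\{\lambda:\ \Re\lambda>0,\ f(\lambda,\tau)=0\}=\frac{1}{2\pi i}\oint_{\Gamma_{R}}\frac{\p_{\lambda}f(\lambda,\tau)}{f(\lambda,\tau)}\,d\lambda ,
\end{equation*}
roots counted with multiplicity, and by the first step the right-hand side does not depend on the choice of $R>R_{0}$.

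Finally I would run the continuity argument: since $g$ is analytic in $\lambda$ and continuous in $\tau$, on any subinterval of $J$ on which $f(\cdot,\tau)$ stays root-free on $\Gamma_{R}$ the integrand above converges uniformly on $\Gamma_{R}$ as $\tau$ varies, so $\tau\mapsto N(\tau)$ is continuous there, and being integer-valued it is locally constant. Consequently $N$ can change only at a parameter $\tau$ for which $f(\cdot,\tau)$ has a root on $\Gamma_{R}$, and by the confinement established in the first step such a root must be purely imaginary --- that is, a root appears on or crosses the imaginary axis, which is the assertion. The main point to pin down carefully is item (a): the local uniformity in $\tau$ of the $\limsup$ bound, so that $R_{0}$ and $\delta$ can be chosen once and for all on $J$; this is precisely what prevents a root from escaping to infinity as $\tau$ varies, and without it the conclusion could fail. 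The remaining bookkeeping at crossing instants (collisions of roots, higher multiplicities) is harmless, since the claim only concerns the times at which $N$ may jump, not the size of the jump.
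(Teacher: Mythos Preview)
The paper does not supply its own proof of this lemma; it simply cites Kuang's monograph \cite{23} for the argument. Your argument-principle proof is exactly the standard one found there: confine all right-half-plane roots to a bounded half-disk via the growth hypothesis, express the root count as a contour integral over the half-disk boundary, and use continuity in $\tau$ of the integrand to conclude that the integer-valued count is locally constant except where a root meets the imaginary segment.

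Your flag on the one genuine subtlety is well placed: the lemma as stated gives the $\limsup$ bound pointwise in $\tau$, but the proof needs it locally uniformly so that a single $R_{0}$ works across a $\tau$-interval and no root can escape to infinity. In the paper's application ($n=1$, $g(\lambda,\tau)=\a e^{-\lambda\tau}+a+id$) this uniformity is immediate, as you note, so nothing is lost. The remainder of your sketch is correct and complete for the purpose at hand.
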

\begin{cor}
As $\tau$ varies, the number of roots of $(3.3)$ including multiplicity in the open right half-plane can change only if a root appears on or crosses the imaginary axis. 
\end{cor}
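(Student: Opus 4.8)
The plan is to obtain Corollary 3.4 as an immediate specialization of Lemma 3.3 applied to the entire function $F$ from $(3.3)$. First I would match notation: set $f(\lambda,\tau):=F(\lambda)=\lambda+\a e^{-\lambda\tau}+a+id$, take $n=1$, and write $f(\lambda,\tau)=\lambda^{n}+g(\lambda,\tau)$ with $g(\lambda,\tau):=\a e^{-\lambda\tau}+a+id$. For every fixed $\tau>0$ the function $g$ is analytic in $\lambda$ (indeed jointly analytic in $(\lambda,\tau)$), so the structural hypothesis of Lemma 3.3 is met, and it only remains to verify the growth condition $\limsup_{\Re\lambda>0,\ |\lambda|\to\infty}|\lambda^{-n}g(\lambda,\tau)|<1$.

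For the growth condition I would use the elementary observation that $|e^{-\lambda\tau}|=e^{-\tau\Re\lambda}\le 1$ whenever $\Re\lambda\ge 0$ and $\tau>0$. Hence, on the half-plane $\Re\lambda>0$, one has the uniform bound
\begin{equation*}
|g(\lambda,\tau)|\le |\a|+|a+id|=|\a|+\sqrt{a^{2}+d^{2}},
\end{equation*}
a constant independent of $\lambda$. Dividing by $|\lambda|$ and letting $|\lambda|\to\infty$ gives $|\lambda^{-1}g(\lambda,\tau)|\to 0$, so the $\limsup$ equals $0<1$, which is exactly the assumption required by Lemma 3.3.

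With the hypothesis verified, Lemma 3.3 applies directly and yields that, as $\tau$ varies, the number of roots of $F(\lambda)=0$ — equivalently of $(3.3)$ — counted with multiplicity in the open right half-plane can change only when a root appears on or crosses the imaginary axis. Since this is precisely the assertion of Corollary 3.4, the argument is finished.

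I do not expect a genuine obstacle here: all the analytic content sits in Lemma 3.3, and the corollary is a routine instance of it. The only point deserving a line of care is that the bound $|e^{-\lambda\tau}|\le 1$ uses $\tau>0$ (and $\Re\lambda\ge 0$), which is exactly the regime under consideration; moreover the bounding constant does not depend on $\tau$, so no uniformity issue arises when $\tau$ ranges over a bounded interval.
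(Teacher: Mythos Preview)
Your proposal is correct and is exactly the intended argument: the paper states Corollary 3.4 as an immediate consequence of Lemma 3.3 without writing out the verification, and your check that $g(\lambda,\tau)=\a e^{-\lambda\tau}+a+id$ is analytic with $|\lambda^{-1}g(\lambda,\tau)|\to 0$ on $\Re\lambda>0$ is precisely what is implicitly required. There is nothing to add.
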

Consequently, it is important to find the value $\tau$ for which equation (3.3) has root on the imaginary axis. Let $F(i\omega)=0$. Then, we have $F(i\omega)=i\omega+\a e^{-i\omega \tau}+a+id=0$. We divide it into the real part and imaginary part, and obtain $\a \cos \omega \tau=-a$ and $\a \sin \omega \tau =\omega+d$. By squaring both sides of these equality and adding them together, we have $\a^{2}=a^{2}+(\omega+d)^{2}$. Therefore, we obtain $\omega=-d\pm \sqrt{D}$ if $D=\a^{2}-a^{2}\ge0$. On the other hand, if $\a^{2}<a^{2}$, (3.3) has no purely imaginary roots. 
\medskip

In the latter case, we have the following result: 
\begin{prop}
Let $\a \neq0$, $bc<0$ and $\a^{2}<a^{2}$. Then all the roots of $(3.1)$ have negative real parts if and only if $a+\a>0$.   
\end{prop}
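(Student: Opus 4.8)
The plan is to combine the absence of purely imaginary roots --- already noted just above for $\a^{2}<a^{2}$ --- with Corollary 3.4 to conclude that the number of roots of $F$ in the open right half-plane is independent of $\tau$, and then to evaluate that number by letting $\tau\to 0^{+}$. First I would record the relevant structural facts: $\a^{2}<a^{2}$ forces $a+\a\neq 0$, and since $d=\sqrt{-bc}>0$ one has $F(0,\tau)=\a+a+id\neq 0$; together with the computation preceding the proposition (a root $\lambda=i\omega$ would force $\a^{2}=a^{2}+(\omega+d)^{2}\ge a^{2}$) this shows that $F(\cdot,\tau)$ has no root on the imaginary axis for any $\tau\ge 0$. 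Moreover, if $\Re\lambda\ge 0$ then $|\lambda+a+id|=|\a|\,e^{-\tau\Re\lambda}\le|\a|$, so every root of $F$ with non-negative real part lies in the fixed disk $|\lambda|\le M:=|\a|+|a|+d$; in particular the hypothesis of Lemma 3.3 holds uniformly in $\tau$. Hence, by Corollary 3.4, the number $N(\tau)$ of roots of $F$ in the open right half-plane is constant for $\tau\in(0,\infty)$.

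To compute this constant I would pass to the limit $\tau\to 0^{+}$. On the compact set $K:=\{\Re\lambda\ge 0,\ |\lambda|\le M+1\}$ one has $F(\lambda,\tau)-F(\lambda,0)=\a\bigl(e^{-\lambda\tau}-1\bigr)\to 0$ uniformly as $\tau\to 0^{+}$, and $F(\lambda,0)=\lambda+\a+a+id$ has the single zero $\lambda_{0}=-(a+\a)-id$, which satisfies $|\lambda_{0}|=\sqrt{(a+\a)^{2}+d^{2}}\le|a|+|\a|+d=M<M+1$ and $\Re\lambda_{0}=-(a+\a)\neq 0$; in particular $F(\cdot,0)$ does not vanish on $\partial K$. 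If $a+\a>0$, then $\lambda_{0}$ lies in the open left half-plane, so $F(\cdot,0)$ has no zero in $K$; by Rouch\'e's theorem (equivalently, Hurwitz's theorem on continuity of zeros) $F(\cdot,\tau)$ has no zero in the interior of $K$ for all small $\tau>0$, and by the disk bound it has no right-half-plane zero with $|\lambda|>M$ either, so $N(\tau)=0$. If $a+\a<0$, then $\lambda_{0}$ lies in the open right half-plane and in the interior of $K$, so the same argument gives exactly one zero of $F(\cdot,\tau)$ there for small $\tau>0$, whence $N(\tau)=1$. Since $N$ is constant on $(0,\infty)$, it is therefore $0$ for every $\tau>0$ when $a+\a>0$ and $1$ for every $\tau>0$ when $a+\a<0$. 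As $F$ has no root on the imaginary axis, for each fixed $\tau$ this means all roots of $F$ have negative real part precisely when $a+\a>0$; and since $G(\lambda)=F(\lambda)\overline{F(\overline{\lambda})}$ and $\Re\lambda=\Re\overline{\lambda}$, the same holds for the roots of $(3.1)$, as already observed.

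The step I expect to require the most care is the passage to $\tau=0$: one must make sure that the count of unstable roots really is continuous there, even though the equation degenerates from a transcendental one, with infinitely many roots, into a first-degree polynomial. This is precisely what the uniform bound $|\lambda|\le M$ on the right-half-plane roots secures --- it confines every potentially unstable root to a fixed compact set on which the convergence $F(\cdot,\tau)\to F(\cdot,0)$ is uniform, so that Rouch\'e's theorem applies --- and it is worth writing out rather than taken for granted.
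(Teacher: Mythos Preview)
Your argument is correct and follows exactly the same route as the paper's proof: the absence of purely imaginary roots together with Corollary~3.4 forces the right-half-plane root count to be constant in $\tau$, and evaluating at $\tau=0$ (where the unique root is $-(a+\a)-id$) settles both directions. The paper dispatches this in two lines, simply invoking Corollary~3.4 at $\tau=0$ without justifying the passage to the degenerate limit; your Rouch\'e/Hurwitz argument with the uniform bound $|\lambda|\le M$ on unstable roots is a welcome elaboration of exactly the point the paper takes for granted.
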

\begin{proof}
Under these assumptions, the equation (3.3) has no purely imaginary roots, as we have observed in the above. Since (3.3) has only root $\lambda=-\a-a-id$ for $\tau=0$, Corollary 3.4 implies the conclusion. 
\end{proof}

In what follows, we consider the case where $\a \neq0$, $bc<0$ and $\a^{2}>a^{2}$. First, we show the following lemma: 
\begin{lem}
Let $\a \neq0$, $bc<0$ and $\a^{2}\ge a^{2}$. If $\lambda=i\omega$ $(\omega \in \R)$ and $\tau \ge0$ satisfy $(3.3)$, we have $\omega=\omega_{1}$ and $\tau=r_{1. n}$, or $\omega=\omega_{2}$ and $\tau=r_{2. n}$, where $\omega_{1}$, $\omega_{2}$, $r_{1. n}$ and $r_{2. n}$ are defined by $(2.1)$ and $(2.2)$. Conversely, if we take $\omega=\omega_{1}$ and $\tau=r_{1. n}$, or $\omega=\omega_{2}$ and $\tau=r_{2. n}$, then $\lambda=i\omega$ and $\tau$ satisfy $(3.3)$.    
\end{lem}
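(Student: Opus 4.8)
The plan is to turn the transcendental equation $F(i\omega)=0$ into the pair of trigonometric equations for the single scalar $\omega\tau$ that was already extracted in the discussion preceding the lemma, and then simply invert them. Recall from that discussion that if $\lambda=i\omega$ and $\tau\ge0$ satisfy $(3.3)$, then $\a\cos\omega\tau=-a$ and $\a\sin\omega\tau=\omega+d$, and squaring and adding forces $(\omega+d)^2=\a^{2}-a^{2}=D$, hence $\omega\in\{\omega_{1},\omega_{2}\}$ with $\omega_{1}=-d-\sqrt D$ and $\omega_{2}=-d+\sqrt D$ (both real since $\a^{2}\ge a^{2}$). So the only remaining content of the forward implication is to determine, for each of these two admissible frequencies, exactly which $\tau\ge0$ work.

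Fix one such $\omega$ and divide the two relations by $\a\neq0$: $\cos\omega\tau=E=-a/\a$ and $\sin\omega\tau=(\omega+d)/\a$, where $(\omega+d)/\a$ equals $-\sqrt D/\a$ when $\omega=\omega_{1}$ and $+\sqrt D/\a$ when $\omega=\omega_{2}$. The key point is that $\coss E\in[0,\pi]$ is the principal value, so $\sin(\coss E)=\sqrt{1-E^{2}}=\sqrt D/|\a|\ge0$; consequently $\cos\omega\tau=E$ forces $\omega\tau\equiv\pm\coss E\pmod{2\pi}$, and the sign of $\sin\omega\tau$ selects which of the two. Splitting on $\mathrm{sgn}\,\a$: for $\omega=\omega_{1}$ one gets $\omega_{1}\tau\equiv-\coss E$ if $\a>0$ and $\omega_{1}\tau\equiv\coss E$ if $\a<0$, while for $\omega=\omega_{2}$ one gets $\omega_{2}\tau\equiv\coss E$ if $\a>0$ and $\omega_{2}\tau\equiv-\coss E$ if $\a<0$ (all modulo $2\pi$).

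It then remains to impose $\tau\ge0$ and read off the admissible integers. Since $d>0$ and $\sqrt D\ge0$ we always have $\omega_{1}<0$, so $\omega_{1}\tau\le0$ picks out exactly the representatives $-\coss E-2n\pi$ (if $\a>0$) or $\coss E-2(n+1)\pi$ (if $\a<0$) for $n=0,1,2,\dots$, which upon dividing by $\omega_{1}$ reproduce $(2.1)$; the frequency $\omega_{2}$ is treated identically, with the sign of $\omega_{2}$—equivalently whether $\sqrt D<d$ or $\sqrt D>d$, which is exactly why $r_{2.n}$ is declared only for $d\neq\sqrt D$—determining the admissible range of $n$ so as to reproduce $(2.2)$. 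The degenerate case $D=0$, in which $\omega_{1}=\omega_{2}=-d$, is checked directly from $\sin\omega\tau=0$ and $\cos\omega\tau=E=\pm1$. For the converse, one substitutes $\omega=\omega_{j}$ and $\tau=r_{j.n}$ into $F(i\omega)$: by construction $\omega_{j}\tau\equiv\pm\coss E$, hence $\cos\omega_{j}\tau=E$ and $\sin\omega_{j}\tau=(\omega_{j}+d)/\a$, so that both the real part $\a\cos\omega_{j}\tau+a$ and the imaginary part $\omega_{j}+d-\a\sin\omega_{j}\tau$ of $F(i\omega_{j})$ vanish.

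I expect the only real difficulty to be the sign bookkeeping: the value of $\mathrm{sgn}\,\a$ decides which of $\pm\coss E$ is the correct residue of $\omega_{j}\tau$ modulo $2\pi$, and the signs of $\omega_{1}$ and $\omega_{2}$ decide the admissible ranges of $n$, so the cases $\a>0$, $\a<0$ (and, for the $\omega_{2}$ branch, $\sqrt D<d$ versus $\sqrt D>d$) must be carried through separately and matched carefully against the piecewise formulas $(2.1)$ and $(2.2)$; the underlying computation in each individual case is routine.
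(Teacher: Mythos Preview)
Your argument is correct and follows essentially the same route as the paper: reduce $F(i\omega)=0$ to the pair $\cos\omega\tau=E$, $\sin\omega\tau=(\omega+d)/\a$, use the sign of $\sin\omega\tau$ (determined by $\mathrm{sgn}\,\a$) to pick the correct residue $\pm\coss E$ modulo $2\pi$, and then invert for $\tau\ge0$. If anything you are more explicit than the paper about how the sign of $\omega_j$ and the constraint $\tau\ge0$ pin down the admissible integers~$n$, and about the degenerate case $D=0$; the paper handles only the $\omega_1$ case in detail, leaves $\omega_2$ to ``similarly'', and dismisses the converse as ``direct calculation''.
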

\begin{proof}
Let $\lambda=i\omega$ $(\omega \in \R)$ and $\tau \ge0$ satisfy $(3.3)$. From the above observation, we have $\omega=\omega_{1}$ or $\omega=\omega_{2}$. We note that 
\begin{equation}
\cos \omega \tau=-\frac{a}{\a}=E \ \ \text{and} \ \ \sin \omega \tau =\frac{\omega+d}{\a}.
\end{equation}
Then, if $\omega=\omega_{1}=-d-\sqrt{D}$ and $\a>0$, since $\sin \omega_{1}\tau<0$ from (3.4), we have $\omega_{1}\tau=2n\pi-\coss E$. Therefore, we get
\begin{equation*}
\tau=\frac{1}{\omega_{1}}\left(-2n\pi-\coss E\right)=r_{1. n}, \ \ \a>0, \ n=0, 1, 2, \cdots.
\end{equation*}
On the other hand, if $\omega=\omega_{1}=-d-\sqrt{D}$ and $\a<0$, we have 
\begin{equation*}
\tau=\frac{1}{\omega_{1}}\left(-2(n+1)\pi+\coss E\right)=r_{1. n}, \ \ \a<0, \ n=0, 1, 2, \cdots,
\end{equation*}
since $\sin \omega_{1}\tau>0$ from (3.4). Thus we obtain $\tau=r_{1. n}$. If $\omega=\omega_{2}=-d+\sqrt{D}$, similarly we have $\tau=r_{2. n}$. The second statement can be proved by a direct calculation.
\end{proof}

Next we examine how the roots of (3.3) cross the imaginary axis as $\tau$ increases. 
\begin{lem}
Let $\a \neq0$, $bc<0$ and $\a^{2}>a^{2}$. Then the purely imaginary root $i\omega_{1}$ of $(3.3)$ crosses the imaginary axis from left to right as $\tau$ increases. Also, the purely imaginary root $i\omega_{2}$ crosses the imaginary axis from left to right if $\sqrt{D}>d$, while $i\omega_{2}$ crosses the imaginary axis from right to left if $\sqrt{D}<d$, where $\omega_{1}$ and $\omega_{2}$ are defined by $(2.1)$ and $(2.2)$.
\end{lem}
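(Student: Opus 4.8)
The plan is to run the standard ``direction of crossing'' argument. We regard each purely imaginary root produced by Lemma 3.6 as the value, at a critical delay, of a branch $\lambda=\lambda(\tau)$ of roots of $(3.3)$, and we compute the sign of $\dfrac{d}{d\tau}\big(\Re\lambda\big)$ at that point: a positive sign means $\Re\lambda$ passes from negative to positive, i.e.\ the root moves from the left half-plane into the right half-plane. Since $F$ is entire in both $\lambda$ and $\tau$ and $\partial_{\tau}F(\lambda,\tau)=-\a\lambda e^{-\lambda\tau}$ is nonzero at every critical pair (we have $\a\neq0$, $e^{-\lambda\tau}\neq0$, and $\omega_{1}=-d-\sqrt{D}<0$ because $d>0$, while $\omega_{2}=-d+\sqrt{D}\neq0$ because $\sqrt{D}\neq d$ in the cases considered), the analytic implicit function theorem produces a locally unique analytic branch $\lambda(\tau)$ through each point $(i\omega_{1},r_{1.n})$ and $(i\omega_{2},r_{2.n})$.

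Differentiating $F(\lambda(\tau),\tau)=0$ and using $\mathrm{sign}\,\Re(z)=\mathrm{sign}\,\Re(1/z)$, I would reduce the problem to the sign of
\begin{equation*}
\Re\left[\left(\frac{d\lambda}{d\tau}\right)^{-1}\right]=\Re\left[-\frac{\partial_{\lambda}F}{\partial_{\tau}F}\right]=\Re\left[\frac{1}{\a\lambda e^{-\lambda\tau}}-\frac{\tau}{\lambda}\right].
\end{equation*}
Substituting $\a e^{-\lambda\tau}=-(\lambda+a+id)$ from $(3.3)$ and evaluating at $\lambda=i\omega$ (the term $-\tau/(i\omega)$ being purely imaginary, hence irrelevant), a short computation gives
\begin{equation*}
\Re\left[\left(\frac{d\lambda}{d\tau}\right)^{-1}\right]_{\lambda=i\omega}=\frac{\omega(\omega+d)}{\omega^{2}\left[(\omega+d)^{2}+a^{2}\right]},
\end{equation*}
whose denominator is positive at both $\omega=\omega_{1}$ and $\omega=\omega_{2}$ (again $\omega_{j}\neq0$, and $(\omega_{j}+d)^{2}+a^{2}=D+a^{2}>0$ since $D>0$). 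So the crossing direction is governed entirely by $\mathrm{sign}\,\big(\omega(\omega+d)\big)$; in particular it is independent of $n$, so the same conclusion holds at every critical value $r_{1.n}$ (resp.\ $r_{2.n}$).

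Finishing is then a one-line sign check: $\omega_{1}(\omega_{1}+d)=(-d-\sqrt{D})(-\sqrt{D})=\sqrt{D}\,(d+\sqrt{D})>0$, so $i\omega_{1}$ always crosses from left to right; and $\omega_{2}(\omega_{2}+d)=(-d+\sqrt{D})\sqrt{D}=\sqrt{D}\,(\sqrt{D}-d)$, which is positive when $\sqrt{D}>d$ and negative when $\sqrt{D}<d$, giving the stated left-to-right resp.\ right-to-left crossing for $i\omega_{2}$. The calculation itself presents no real obstacle; the one point to keep an eye on is that the whole argument rests on $\omega_{1}\neq0$ and $\omega_{2}\neq0$, so that $\partial_{\tau}F\neq0$ along the branch and the denominator in the sign formula does not vanish. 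The former always holds, and the latter holds precisely because $\sqrt{D}\neq d$ in the two cases treated here; the borderline value $\sqrt{D}=d$, where $\omega_{2}=0$ and a zero root sits on the imaginary axis, is genuinely degenerate and is (rightly) excluded from this lemma.
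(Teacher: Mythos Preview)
Your argument is correct and matches the paper's: both reduce the crossing direction to the sign of $\omega(\omega+d)$, the paper by computing $\Re(d\lambda/d\tau)$ directly and you via the equivalent $\Re\big[(d\lambda/d\tau)^{-1}\big]$. One small correction: the hypothesis of the implicit function theorem giving a branch $\lambda=\lambda(\tau)$ is $\partial_{\lambda}F\neq 0$, not $\partial_{\tau}F\neq 0$; since $\partial_{\lambda}F|_{\lambda=i\omega}=(1+a\tau)+i\tau(\omega+d)$ can vanish only when $\omega+d=0$, which is ruled out by $D>0$, the branch does exist and your computation goes through unchanged.
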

\begin{proof}
Taking the derivative of $\lambda$ with respect to $\tau$ on the both sides of (3.3), we obtain 
\begin{equation*}
\frac{d\lambda}{d\tau}-\a e^{-\lambda \tau}\left(\tau \frac{d\lambda}{d\tau}+\lambda \right)=0. 
\end{equation*}
Thus, we have 
\begin{equation*}
\left(1+\tau(\lambda+a+id)\right)\frac{d\lambda}{d\tau}=
-\lambda(\lambda+a+id).
\end{equation*}
Therefore, substituting $\lambda=i\omega$ to the above equation, we have 
\begin{align*}
\left.\frac{d\lambda}{d\tau}\right|_{\lambda=i\omega}&=-\frac{i\omega(i\omega+a+id)}
{1+\tau(i\omega+a+id)}=\frac{\omega(\omega+d)-a \omega i}{(1+a \tau)+i\tau(\omega+d)}=\frac{(\omega(\omega+d)-a \omega i)((1+a \tau)-i\tau(\omega+d))}{(1+a \tau)^2+\tau^2(\omega+d)^2}.
\end{align*}
This yields that 
\begin{equation}
\d\Re\left.\frac{d\lambda}{d\tau}\right|_{\lambda=i\omega}
=\frac{\omega(\omega+d)(1+a \tau)-a \omega \tau(\omega+d)}{{(1+a \tau)^2+\tau^2(\omega+d)^2}}
=\frac{\omega(\omega+d)}{{(1+a \tau)^2+\tau^2(\omega+d)^2}}. 
\end{equation}
If we take $\omega=\omega_{1}$ in (3.5), since $\omega_{1}(\omega_{1}+d)=(d+\sqrt{D})\sqrt{D}>0$, we have $\Re (d\lambda/d\tau)|_{\lambda=i\omega_{1}}>0$. Therefore, the root $i\omega_{1}$ crosses the imaginary axis from left to right as $\tau$ increases. On the other hand, if we take $\omega=\omega_{2}$, we have $\omega_{2}(\omega_{2}+d)=(-d+\sqrt{D})\sqrt{D}$. Therefore, it follows that $\Re (d\lambda/d\tau)|_{\lambda=i\omega_{2}}>0$ holds if $\sqrt{D}>d$, while $\Re (d\lambda/d\tau)|_{\lambda=i\omega_{2}}<0$ holds if $\sqrt{D}<d$. Thus, $i\omega_{2}$ crosses the imaginary axis from left to right if $\sqrt{D}>d$, while $i\omega_{2}$ crosses the imaginary axis from right to left if $\sqrt{D}<d$. This completes the proof. 
\end{proof}
\begin{rem}
{\rm
In the case of $\a^{2}=a^{2}$, since $\omega_{1}=\omega_{2}=-d$, we have $\Re (d\lambda/d\tau)|_{\lambda=i\omega_{j}}=0$ for $j=1,2$ from (3.5). Hence, we cannot analyze the behavior of the roots $i\omega_{j}$ by using (3.5). 
}
\end{rem}

Now, we are ready to complete the proof of Theorem 2.1. 
\begin{prop}
Let $\a \neq0$, $bc<0$ and $\a^{2}>a^{2}$. Then all the roots of $(3.1)$ have negative real parts if and only if the condition $\rm{(ii-1)}$ holds: 
\end{prop}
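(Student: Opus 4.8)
The plan is to count, as $\tau$ increases from $0$, the number $N(\tau)$ of roots of (3.3) lying in the open right half-plane, counted with multiplicity. By the discussion preceding Proposition 3.6, the zero solution of (1.1) is asymptotically stable exactly when (3.3) has no root with nonnegative real part, i.e.\ when $N(\tau)=0$ and $\tau$ is not one of the values at which (3.3) has a purely imaginary root. By Lemma 3.6 the latter values are precisely the positive numbers among $\{r_{1.n}\}_{n\ge0}$ and $\{r_{2.n}\}_{n\ge0}$; denote this set by $\mathcal S$. By Corollary 3.4, $N$ is constant on every subinterval of $(0,\infty)$ disjoint from $\mathcal S$. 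At $\tau=0$, equation (3.3) has the single root $\lambda=-\a-a-id$, and since $\a^{2}>a^{2}$ forces $a+\a\ne0$ (with the sign of $a+\a$ equal to that of $\a$), we get $N(0)=0$ when $a+\a>0$ and $N(0)=1$ when $a+\a<0$.

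Next I would determine the jump of $N$ across each point of $\mathcal S$. Since (3.5) gives $\Re(d\lambda/d\tau)\ne0$ at $i\omega_{1}$ and at $i\omega_{2}$, exactly one (simple) root of (3.3) crosses the imaginary axis at each such $\tau$; by Lemma 3.7 the crossing at $\tau=r_{1.n}$ always increases $N$ by $1$, while the crossing at $\tau=r_{2.n}$ increases $N$ by $1$ if $\sqrt D>d$ and decreases it by $1$ if $\sqrt D<d$. A look at (2.1)--(2.2) shows that when $\sqrt D\ge d$ none of the $r_{2.n}$ is positive, so $\mathcal S=\{r_{1.n}\}_{n\ge0}$ is an increasing sequence of destabilizing values and $N(\tau)=N(0)+\#\{n:r_{1.n}<\tau\}$; this is $0$ for some $\tau$ only if $a+\a>0$, and then precisely for $\tau\in(0,r_{1.0})$, which is the assertion of (ii-1) in the subcase $\sqrt D\ge d$.

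The core of the argument is the case $\sqrt D<d$, where $\{r_{1.n}\}$ and $\{r_{2.n}\}$ are increasing sequences of positive numbers, the first destabilizing and the second stabilizing, so $N(\tau)=N(0)+n_{1}(\tau)-n_{2}(\tau)$ with $n_{i}(\tau):=\#\{n:r_{i.n}<\tau\}$. The decisive step is to compare these two interlacing sequences by an elementary computation with (2.1)--(2.2). If $a+\a>0$ (equivalently $\a>0$, so $N(0)=0$), one gets $r_{1.n}<r_{2.n}$ for all $n$, and $r_{2.n}<r_{1.n+1}$ exactly when $n+1<d\,\coss E/(2\sqrt D\,\pi)$, i.e.\ for $n\le k-1$; the first inequality gives $n_{2}\le n_{1}$, hence $N\ge0$ and $N(\tau)=0$ iff $\tau\notin\bigcup_{n}(r_{1.n},r_{2.n}]$, and the second shows this union equals $\bigcup_{n=0}^{k-1}(r_{1.n},r_{2.n}]\cup(r_{1.k},\infty)$, whose complement in $(0,\infty)$, with the points of $\mathcal S$ removed, is $(0,r_{1.0})\cup(r_{2.0},r_{1.1})\cup\cdots\cup(r_{2.k-1},r_{1.k})$. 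If $a+\a<0$ (equivalently $\a<0$, so $N(0)=1$), the analogous computation gives $r_{1.n}<r_{2.n+1}$ for all $n$ and $r_{2.n}<r_{1.n}$ exactly when $n<\big((d-\sqrt D)\pi-d\,\coss E\big)/(2\sqrt D\,\pi)$, i.e.\ for $n\le l$; then $n_{2}\le n_{1}+1$, hence $N\ge0$ and $N(\tau)=0$ iff $\tau\in\bigcup_{n}(r_{2.n},r_{1.n}]=\bigcup_{n=0}^{l}(r_{2.n},r_{1.n}]$ (empty, so no stability, when $l<0$), which after deleting $\mathcal S$ is $(r_{2.0},r_{1.0})\cup\cdots\cup(r_{2.l},r_{1.l})$. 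Comparing these descriptions with the statement establishes (ii-1).

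The step I expect to be the main obstacle is exactly this comparison of $\{r_{1.n}\}$ and $\{r_{2.n}\}$: beyond proving the inequalities above, one must check that the cut-off indices coincide with the floor functions $k$ and $l$ as defined, and handle the degenerate configurations --- when $d\,\coss E/(2\sqrt D\,\pi)$ or $\big((d-\sqrt D)\pi-d\,\coss E\big)/(2\sqrt D\,\pi)$ is an integer, or when some $r_{1.n}$ equals some $r_{2.m}$ (then two roots cross simultaneously with no net change in $N$, which remains consistent with $N(\tau)=N(0)+n_{1}(\tau)-n_{2}(\tau)$).
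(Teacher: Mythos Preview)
Your proposal is correct and follows essentially the same strategy as the paper: define the right-half-plane root count, use Corollary~3.4 and Lemma~3.7 to determine the jump of this count at each purely imaginary crossing, and then settle the matter by comparing the sequences $\{r_{1.n}\}$ and $\{r_{2.n}\}$, splitting into the cases $a+\a>0$ and $a+\a<0$. Your treatment of the subcase $\sqrt{D}\ge d$ is in fact slightly cleaner than the paper's --- you observe directly from (2.1)--(2.2) that no $r_{2.n}$ is positive (so only the destabilizing $r_{1.n}$ matter), whereas the paper argues via the crossing direction of $i\omega_{2}$ and invokes an ordering $r_{1.0}<r_{2.0}$ that is not literally true when the $r_{2.n}$ are negative; your formulation avoids this slip while reaching the same conclusion.
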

\begin{proof}
For all $\tau \ge0$, we define $\nu (\tau)$ as the number of the roots of (3.3) including multiplicity whose real parts are positive.

\underline{Case $\rm{(I)}$: $a+\a>0$.} In this case, we note that equation (3.3) has only root $\lambda=-\a-a-id$ for $\tau=0$, and it means all the root lie in the left half of the complex plane for $\tau=0$. Also, from $\a^{2}>a^{2}$ and $a+\a>0$, we have $\a>0$.\\[.5em]
\underline{Subcase $\rm{(I-1)}$: $\sqrt{D}>d$.} From Lemma 3.6, (3.3) has roots $\omega=\omega_{1}<0$ with $\tau=r_{1. n}$ or $\omega=\omega_{2}>0$ with $\tau=r_{2. n}$ for $n=0, 1, 2, \cdots$. Let $\lambda_{1.n}(\tau)$ and $\lambda_{2. n}(\tau)$ are the branches of the roots of (3.3) such that $\lambda_{1.n}(r_{1. n})=i\omega_{1}$ and $\lambda_{2. n}(r_{2. n})=i\omega_{2}$, respectively. By virtue of Corollary 3.4 and Lemma 3.7, $\lambda_{1. n}(\tau)$ moves in the right half of the complex plane and cannot move in the left half of the complex plane crossing on the imaginary axis as $\tau$ increases from $r_{1. n}$. Similarly, $\lambda_{2. n}(\tau)$ moves in the right half-plane and cannot move in the left half-plane crossing on the imaginary axis as $\tau$ increases from $r_{2. n}$. Therefore, since $r_{1. 0}<r_{2. 0}$, it follows that 
\begin{equation}
\nu(\tau)=0 \ \ \text{if} \ \ \tau \in (0,r_{1.0})\ ,\ \nu(\tau)\ge1 \ \ \text{if} \ \ \tau \in (r_{1.0},\infty).
\end{equation}
\underline{Subcase $\rm{(I-2)}$: $\sqrt{D}=d$.} From Lemma 3.6, (3.3) has roots $\omega=\omega_{1}<0$ with $\tau=r_{1. n}$ for $n=0, 1, 2, \cdots$. Let $\lambda_{1.n}(\tau)$ is the branches of the roots of (3.3) such that $\lambda_{1. n}(r_{1.n})=i\omega_{1}$. From Corollary 3.4 and Lemma 3.7, $\lambda_{1.n}(\tau)$ moves in the right half-plane and cannot move in the left half-plane crossing on the imaginary axis as $\tau$ increases from $r_{1. n}$. Therefore, we obtain (3.6). \\[.5em]
\underline{Subcase $\rm{(I-3)}$: $\sqrt{D}<d$.} From Lemma 3.6, (3.3) has roots $\omega=\omega_{1}<0$ with $\tau=r_{1. n}$ or $\omega=\omega_{2}<0$ with $\tau=r_{2. n}$ for $n=0, 1, 2, \cdots$. Let $\lambda_{1.n}(\tau)$ and $\lambda_{2. n}(\tau)$ are the branches of the roots of (3.3) such that $\lambda_{1.n}(r_{1. n})=i\omega_{1}$ and $\lambda_{2. n}(r_{2. n})=i\omega_{2}$, respectively. Corollary 3.4 and Lemma 3.7 asserts that $\lambda_{1. n}(\tau)$ moves in the right half-plane as $\tau$ increases from $r_{1. n}$. On the other hand, $\lambda_{2. n}(\tau)$ moves in the left half-plane as $\tau$ increases from $r_{2. n}$. 
Thus, we need to examine the order relation between $r_{1. n}$ and $r_{2. n}$. First, we note that 
\begin{equation}
r_{2.n}-r_{1.n}=\frac{2}{\omega_1 \omega_2} \left(2n\sqrt{D}\pi+\pi d+\sqrt{D}(\pi-\coss E)\right)>0, \ \ n=0,1,2, \cdots, 
\end{equation}
\begin{equation}
r_{1.n}-r_{2.n-1}=\frac{2}{\omega_1 \omega_2} \left(d\ \coss E-2n\sqrt{D}\pi\right), \ \ r_{1.n+1}-r_{1.n}=\frac{2\pi}{|\omega_1|}<\frac{2\pi}{|\omega_2|}=r_{2.n+1}-r_{2.n}. 
\end{equation}
Therefore, from (3.8), there exists a positive integer $m_{1}(n)$ such that 
\begin{equation}
r_{2.n}<r_{1.m_{1}(n)}<r_{2.n+1}.
\end{equation}
\noindent
\underline{Subcase $\rm{(I-3-a)}$: $d\coss E<2\sqrt{D}\pi$, i.e. $k=0$} (recall $\d k=\left\lfloor\frac{d\ \coss E}{2\sqrt{D}\pi}\right\rfloor$).  \\[.1em]
In this case, we have $r_{1.n}-r_{2.n-1}<0$ for $n=1,2,\cdots$ from (3.8). Thus, together with (3.9), we get 
\begin{equation*}
0<r_{1.0}<r_{1.1}<r_{2.0} <r_{1.2}<\cdots. 
\end{equation*}
Therefore, from Corollary 3.4 and Lemma 3.7, we obtain (3.6). \\[.5em]
\underline{Subcase $\rm{(I-3-b)}$: $d\coss E\ge2\sqrt{D}\pi$, i.e. $k\ge1$.} \\[.3em]
In this case, since 
\begin{equation*}
r_{1.n}-r_{2.n-1}>0, \ \ n=1,2,\cdots , k \ \ \text{and} \ \ r_{1.n}-r_{2.n-1}<0, \ \ n=k+1,k+2,\cdots 
\end{equation*}
from (3.9), it follows from (3.7) that 
\begin{equation}
0<r_{1.0}<r_{2.0}<\cdots<r_{2.k-1}<r_{1.k}<r_{1.k+1}<r_{2.k}<\cdots.
\end{equation}
Therefore, from Corollary 3.4, Lemma 3.7, (3.9) and (3.10), we finally obtain 
\begin{equation*}
\begin{cases}
\nu(\tau)=0\ \ \text{if} \ \ \tau \in (0,r_{1.0}) \cup (r_{2.0},r_{1.1}) \cup \cdots \cup (r_{2.k-1},r_{1.k}), \\
\nu(\tau)=1\ \ \text{if} \ \ \tau \in (r_{1.0},r_{2.0})\cup (r_{1.1},r_{2.1})\cup \cdots \cup(r_{1.k-1},r_{2.k-1}),\\
\nu(\tau)\ge 1\ \ \text{if} \ \ \tau \in (r_{1.k},\infty).
\end{cases}
\end{equation*}

\underline{Case $\rm{(I\hspace{-.1em}I)}$: $a+\a<0$.} In this case, we note that equation (3.3) has only root $\lambda=-\a-a-id$ for $\tau=0$, and it means all the root lie in the right half of the complex plane for $\tau=0$. Also, from $\a^{2}>a^{2}$ and $a+\a<0$, we have $\a<0$.\\[.5em]
\underline{Subcase $\rm{(I\hspace{-.1em}I-1)}$: $\sqrt{D}\ge d$.} By the same argument given in Subcase $\rm{(I-1)}$ and Subcase $\rm{(I-2)}$, the characteristic roots of (3.3) cannot move in the left half-plane crossing the imaginary axis as $\tau$ increases. Therefore, we have $\nu(\tau) \ge1$ for $\tau>0$. \\[.5em]
\underline{Subcase $\rm{(I\hspace{-.1em}I-2)}$: $\sqrt{D}<d$.} Similarly to Subcase $\rm{(I-3)}$, we have the following results: 
\begin{equation*}
\begin{cases}
\nu(\tau)=0 \ \ \text{if} \ \ \tau \in (r_{2.0},r_{1.0}) \cup (r_{2.1},r_{1.1}) \cup \cdots \cup (r_{2.l},r_{1.l}), \\
\nu(\tau)=1\ \ \text{if} \ \ \tau \in (0,r_{2.0})\cup(r_{1.0},r_{2.1})\cup  \cdots \cup(r_{1.l-1},r_{2.l}),\\
\nu(\tau)\geq 1\ \ \text{if} \ \ \tau \in (r_{1.l},\infty).
\end{cases}
\end{equation*}
 
In conclusion, if $\a \neq0$, $bc<0$ and $\a^{2}>a^{2}$, all the roots of $(3.3)$ have negative real parts if and only if the condition $\rm{(ii-1)}$ holds. This completes the proof.   
\end{proof}

Finally, we can prove Theorem 2.1. 
\begin{proof}[\rm{\bf{End of the Proof of Theorem 2.1}}]
Combining Proposition 3.2, Proposition 3.5 and Proposition 3.9, we immediately have Theorem 2.1. 
\end{proof}

\vskip10pt
\par\noindent
\textbf{Conclusion} 

We have established the necessary and sufficient conditions for the asymptotic stability for the zero solution of (1.1) including the stability switching. We believe that our result can be applicable for the nonlinear problems in the several fields like population models, neural network models, and also traffic flow, to investigate some dynamical behavior.

\vskip10pt
\par\noindent
\textbf{Acknowledgments} 

The authors would like to express their sincere gratitude to Professor Hideo Kubo for his feedback and valuable advices. The authors also would like to thank Professor Yoshihiro Ueda for his useful suggestions and comments.

This study is partially supported by MEXT through Program for Leading Graduate Schools (Hokkaido University ``Ambitious Leader's Program"). 


\vskip10pt
\par\noindent
\begin{flushleft}Wataru Saito\\
Department of Mathematics, Hokkaido University\\
Sapporo 060-0810, Japan\\
E-mail: s173012math@frontier.hokudai.ac.jp
\vskip10pt
Ikki Fukuda\\
Department of Mathematics, Hokkaido University\\
Sapporo 060-0810, Japan\\
E-mail: i.fukuda@math.sci.hokudai.ac.jp
\end{flushleft}


\begin{thebibliography}{99}
\bibitem{1}
P. Baldi, A. Atiya: {\it How delays affect neural dynamics and learning}, IEEE Tran, Neural Networks {\bf 5} (1994) 612-621.
\bibitem{22}
R. Bellman, K.L. Cooke: {\it Differential-difference equations}, Academic Press, New York-
London, 1963.
\bibitem{4}
H.I. Freedman, Y. Kuang: {\it Stability switches in a linear scalar neutral delay equations}, Funkcial. Ekvac. {\bf34} (1991) 187-209.
\bibitem{5}
M. Funakubo, T. Hara, S. Sakata: {\it On the uniform asymptotic stability for a linear integro-differential equation of Voltera type}, J. Math. Anal. Appl. {\bf324} (2006) 1036-1049.
\bibitem{7}
J.K. Hale, S.V. Verduyn Lunel: {\it Introduciton to functional differential equations}, Applied Mathematical Sciences {\bf 99}, Springer-Verlag, New York, 1993.
\bibitem{21}
N.D. Hayes: {\it Roots of the transcendental equation associated with a certain difference-differential equation}, J. London Math. Soc. {\bf 25} (1950) 226-232.
\bibitem{23}
Y. Kuang: {\it Delay differential equations with applications in population dynamics}, Academic Press, Boston, 1993.
\bibitem{13}
H. Matsunaga: {\it Stability switches in a system of linear differential equations with diagonal delay}, Appl. Math. Comput. {\bf 212} (2009) 145-152.
\bibitem{24}
J. Nishiguchi: {\it On parameter dependence of exponential stability of equilibrium solutions in differential equations with a single constant delay}, Discrete Contin. Dyn. Syst. {\bf 36-10} (2016) 5657-5679.
\bibitem{17}
G. Orosz, R.E. Wilson, G. St$\acute{\text{e}}$p$\acute{\text{a}}$n: {\it Traffic jams: dynamics and control}, Philos. Trans. R. Soc. Lond. Ser. A Math. Phys. Eng. Sci. {\bf 368} (2010) 4455-4479.
\bibitem{19}
M. Suzuki, H. Matsunaga: {\it Stability criteria for a class of linear differential equations with off-diagonal delays}, Discrete Contin. Dyn. Syst. {\bf 24-4} (2009) 1381-1391.
\bibitem{20}
X.P. Yan,Y.D. Chu: {\it Stability and bifurcation analysis for a delayed Lotka-Volterra predator-prey system}, J. Comput. Appl. Math. {\bf 196} (2006) 198-210.
\end{thebibliography}
\end{document}